\newtheorem{example}{Example}[section]
\newtheorem{theorem}[example]{Theorem}
\newtheorem{corollary}[example]{Corollary}
\title{What is the probability that two elements of a finite ring have product zero?} 
\author{Sanhan M. S. Khasraw\\
Department of Mathematics, College of Education,\\ Salahaddin University-Erbil, Erbil, Iraq\\
sanhan.khasraw@su.edu.krd
}
\date{}
\begin{document}
\maketitle

\begin{center}
{\large{\textbf{Abstract}}}\\
\end{center}
In this paper we consider the probability that two elements of a finite ring have product zero. We find bounds of this probability of a finite commutative ring with identity 1. The explicit computations for the ring $\mathds{Z}_n$, the ring of integers modulo $n$, have been obtained. \\

\textbf{Keywords}: Zero-divisor, zero-divisor graph, probability.

\section{Introduction}

The problem of finding the probability $P(G)$ that two elements of a finite group $G$ commute was considered by Gustafson \cite{MR02944871}. He showed that $P(G)\leq 5/8$. For more studies about probability and group theory, see\cite{rusin1979probability, lescot1988degre}.

I. Beck in 1988 \cite{beck1988coloring} introduced the notion of \textit{zero-divisor graphs} of a commutative ring. The zero-divisor graph whose vertices are non-zero zero-divisors in which two vertices are adjacent if their product is zero. Zero-divisor graph of a commutative ring has been studied by many authors, see \cite{anderson2003zero, anderson1999zero, livingston1997structure}.

In this paper the probability $P(R)$ that two elements of a finite ring $R$ multiply to 0 is considered. The probability $P(R)$ that two elements chosen at random (with replacement) from a ring $R$ have product zero is 
$P(R)=\frac{|Ann|}{|R \times R|}$,
where $Ann=\{(x, y)\in R\times R \;|\; xy=0\}$.\\
 To count the elements of $Ann$, one can see that for each $x \in R$, the number of elements of $Ann$ of the form $(x, y)$ is $|Ann(x)|$, where $Ann(x)$ is the annihilator of $x$ in $R$. Hence
      $|Ann|=\sum_{x \in R} |Ann(x)|,$
where the sum is taken over all $x\in R$.
Note that if $xy=0$, then both $(x,y)$ and $(y,x)$ are elements of $Ann$. 

Throughout this paper, all rings are assumed to be finite commutative with identity 1 in
order to establish lower and upper bounds of $P(R)$.
Furthermore, the author assumes that the identity element 1 is different from 0, as the zero ring is a finite commutative ring with identity (namely $0$ since $0 \cdot 0=0$) and the corresponding probability that two elements multiply to $0$ is $1$.
Also, in case of a ring $R$ does not have an identity element, the probability that two elements multiply to $0$ can be $1$, for instance, if $R$ is a ring such that $ab=0$ for all $a, b \in R$, then $P(R)=1$. These two cases are exempted throughout this paper in order to investigate minimum and maximum values of the probability $P(R)$.

\section{Bounds for P(R)}
In this section, the general lower and upper bounds for $P(R)$ will be found.

\begin{theorem}\label{lower}
Suppose $|R|=l$. Then $P(R)\geq \frac{2l+|Z(R)|-1}{l^2}$, where $Z(R)$ is the set of nonzero zero-divisors of $R$.
\end{theorem}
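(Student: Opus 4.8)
The plan is to work directly from the two facts recalled in the introduction, namely $P(R)=|Ann|/l^2$ and $|Ann|=\sum_{x\in R}|Ann(x)|$, and to estimate each summand $|Ann(x)|$ according to the type of the element $x$. The key structural input I would invoke is that in a finite commutative ring with identity every nonzero element is either a unit or a zero-divisor. This lets me partition $R$ into the three disjoint pieces $\{0\}$, the set $U$ of units, and the set $Z(R)$ of nonzero zero-divisors, so that $|U|=l-1-|Z(R)|$.

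Next I would evaluate or bound $|Ann(x)|$ on each piece. For $x=0$ we have $Ann(0)=R$, contributing $l$ to the sum. For a unit $x$, multiplication by $x$ is a bijection of $R$, so $xy=0$ forces $y=0$ and hence $|Ann(x)|=1$; the units therefore contribute exactly $|U|=l-1-|Z(R)|$ in total. For $x\in Z(R)$, the defining property of a nonzero zero-divisor gives some nonzero $y$ with $xy=0$, and since also $0\in Ann(x)$ we get $|Ann(x)|\geq 2$; summing over $Z(R)$ yields at least $2|Z(R)|$.

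Adding the three contributions gives $|Ann|\geq l+(l-1-|Z(R)|)+2|Z(R)|=2l-1+|Z(R)|$, and dividing by $l^2$ produces the claimed inequality. The arithmetic here is entirely routine; the step I would flag as the crux is the justification that every nonzero non-unit is a zero-divisor, which is exactly where finiteness is used: the map $y\mapsto xy$ is injective if and only if it is surjective on a finite set, so a non-zero-divisor is automatically invertible. I would also remark that equality holds precisely when every $x\in Z(R)$ satisfies $|Ann(x)|=2$, which characterizes the rings meeting this lower bound.
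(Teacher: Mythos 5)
Your proof is correct and follows essentially the same route as the paper: split $|Ann|=\sum_{x\in R}|Ann(x)|$ over the three pieces $\{0\}$, $Z(R)$, and the remaining nonzero elements, bound each contribution by $l$, $2|Z(R)|$, and $l-1-|Z(R)|$ respectively, and divide by $l^2$. The only difference is cosmetic: the paper obtains $|Ann(x)|=1$ for nonzero non-zero-divisors directly from the definition (no nonzero element annihilates such an $x$), so the step you flag as the crux---that finiteness makes every nonzero non-unit a zero-divisor---is true but not actually needed for the argument.
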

\begin{proof}
It is clear that $|Ann(0)|=l$. Suppose $Z(R)$ be the set of nonzero zero-divisors of $R$. For every $x \in Z(R)$ we have that $|Ann(x)| \geq 2$,  and $|Ann(x)|=1$ for each $0 \neq x \notin Z(R)$. Thus, 
$$|Ann|=|Ann(0)|+\sum_{x \in Z(R)} |Ann(x)|+\sum_{0 \neq x \notin Z(R)} |Ann(x)|$$\\
$$\geq l+2 \cdot |Z(R)|+(l-1-|Z(R)|)\cdot 1=2l+|Z(R)|-1.$$
Therefore, 
$P(R)=\frac{|Ann|}{l^2}\geq \frac{2l+|Z(R)|-1}{l^2}$.
\end{proof}

\medskip

\begin{theorem}\label{upper}
Suppose $|R|=l$. Then $P(R)\leq \frac{2l+(m-1)|Z(R)|-1}{l^2}$,  where $Z(R)$ is the set of nonzero zero-divisors of $R$, and $m=max\{|Ann(x)| \;:\; x\in Z(R)\}$.
\end{theorem}
\begin{proof}
Suppose $|R|=l$. Again, $|Ann(0)|=l$, and let the number of nonzero zero-divisors of $R$ is $k$, that is, $k:=|Z(R)|$. 
Suppose that $m:=max\{|Ann(x)| \;:\; x\in Z(R)\}$. Note that $k$ and $m$ are vary while $l$ varies. 
 Thus,

$$|Ann|=|Ann(0)|+\sum_{x \in Z(R)} |Ann(x)|+\sum_{0 \neq x \notin Z(R)} |Ann(x)|$$\\
$$\leq l+m \cdot k+(l-1-k)\cdot 1=2l+(m-1)k-1.$$\\
So, 
$$P(R)\leq \frac{2l+(m-1)k-1}{l^2}.$$
\end{proof}

\medskip

\begin{corollary}
$P(R)\leq \frac{3}{4}$.
\end{corollary}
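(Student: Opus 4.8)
The plan is to start from the upper bound furnished by Theorem~\ref{upper}, namely $P(R)\le \frac{2l+(m-1)k-1}{l^2}$ with $l=|R|$, $k=|Z(R)|$ and $m=\max\{|Ann(x)|:x\in Z(R)\}$, and then to bound the ring-dependent quantities $k$ and $m$ by universal expressions in $l$ alone. Thus it suffices to show that $\frac{2l+(m-1)k-1}{l^2}\le\frac34$, or equivalently, after clearing denominators, that $8l+4(m-1)k-4\le 3l^2$.

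The key structural observation I would use is that for every nonzero zero-divisor $x$ the annihilator $Ann(x)$ is an ideal not containing $1$ (since $x\cdot 1=x\neq 0$), hence a proper additive subgroup of $(R,+)$. As its order divides $l$ and is strictly smaller than $l$, we get $|Ann(x)|\le l/2$, and therefore $m\le l/2$. Next I would count elements: since $R$ contains at least the unit $1$, the number of units is at least one, so $l\ge k+2$, which gives $k\le l-2$.

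Substituting these two bounds yields $(m-1)k\le\bigl(\tfrac{l}{2}-1\bigr)(l-2)=\tfrac{(l-2)^2}{2}$, whence $2l+(m-1)k-1\le\tfrac{l^2+2}{2}$ after routine simplification. Dividing by $l^2$ gives $P(R)\le\frac12+\frac{1}{l^2}$, and since $1\neq 0$ forces $l\ge 2$ we obtain $P(R)\le\frac12+\frac14=\frac34$. Equality is attained exactly when $l=2$, i.e.\ for $R=\mathds{Z}_2$, which serves as a sanity check.

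I expect the only genuine content to be the inequality $m\le l/2$, which rests on $Ann(x)$ being a proper subgroup of the additive group; everything after that is elementary algebra. One should also dispatch the degenerate case $k=0$ (when $R$ is a field and $m$ is undefined): there the bound from Theorem~\ref{upper} reduces to $P(R)\le\frac{2l-1}{l^2}$, which is plainly below $\frac34$ for $l\ge 2$, so the conclusion still holds.
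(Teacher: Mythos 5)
Your proof is correct and follows essentially the same route as the paper: apply Theorem~\ref{upper}, bound $m\le \tfrac{1}{2}l$ (via $Ann(x)$ being a proper ideal) and $k\le l-2$, simplify to $P(R)\le\tfrac12+\tfrac{1}{l^2}$, and conclude using $l\ge 2$. Your explicit treatment of the field case $k=0$ (where $m$ is undefined) is a small tightening the paper omits, but it does not change the argument.
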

\begin{proof}
Since $Ann(x)$ is an ideal of $R$ for any $x\in R$, it must be the case that $m=max\{|Ann(x)| \;:\; x\in Z(R)\}\leq \frac{1}{2}l$ and, in general, $k=|Z(R)|\leq l-2$, for then, by Theorem \ref{upper}, 
$P(R)\leq \frac{2l+(\frac{1}{2}l-1)(l-2)-1}{l^2}=\frac{1}{2}+\frac{1}{l^2}$ which is decreasing according to $l$. In particular, if $l=2$, then $P(R)\leq \frac{3}{4}$. 
\end{proof}

From Theorem \ref{lower} and Theorem \ref{upper}, the following corollaries can be stated.

\begin{corollary}\label{cor1}
If $R$ is an integral domain and $|R|=l$, then $P(R)=\frac{2l-1}{l^2}$.
\end{corollary}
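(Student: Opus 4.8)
The plan is to observe that an integral domain has no nonzero zero-divisors, so that $Z(R) = \emptyset$ and hence $|Z(R)| = 0$. With this single fact in hand, both the lower bound of Theorem \ref{lower} and the upper bound of Theorem \ref{upper} collapse to the same quantity, pinning down $P(R)$ exactly.

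First I would substitute $|Z(R)| = 0$ into Theorem \ref{lower}, obtaining $P(R) \geq \frac{2l + 0 - 1}{l^2} = \frac{2l - 1}{l^2}$. Next I would substitute $|Z(R)| = 0$ (equivalently $k = 0$) into Theorem \ref{upper}; the term $(m-1)k$ then vanishes, yielding $P(R) \leq \frac{2l - 1}{l^2}$. Since the lower and upper bounds coincide, the equality $P(R) = \frac{2l-1}{l^2}$ follows immediately.

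As a self-contained sanity check, one can also compute $|Ann|$ directly. One always has $|Ann(0)| = l$, while for any nonzero $x \in R$ the defining property of an integral domain forces $xy = 0 \Rightarrow y = 0$, so $Ann(x) = \{0\}$ and $|Ann(x)| = 1$. Summing over all $x$ gives $|Ann| = l + (l-1) \cdot 1 = 2l - 1$, whence $P(R) = \frac{|Ann|}{l^2} = \frac{2l-1}{l^2}$, in agreement with the sandwiched value.

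There is essentially no obstacle here; the only point worth a word is that $m = \max\{|Ann(x)| : x \in Z(R)\}$ is formally undefined when $Z(R)$ is empty. This causes no difficulty, because in Theorem \ref{upper} the quantity $m$ enters only through the product $(m-1)k$ with $k = |Z(R)| = 0$, so the undefined factor is annihilated and the bound is unaffected.
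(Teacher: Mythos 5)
Your proposal is correct and follows essentially the same route as the paper: substituting $|Z(R)|=0$ into Theorems \ref{lower} and \ref{upper} and sandwiching $P(R)$ between two identical bounds. Your additional direct computation of $|Ann|=2l-1$ and your observation that the formally undefined $m$ is harmless (since it only appears multiplied by $k=0$) are careful touches the paper omits, but they do not change the underlying argument.
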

\begin{proof}
Since $R$ is an integral domain, then $|Z(R)|=0$. Thus, $\frac{2l-1}{l^2}\leq P(R)\leq \frac{2l-1}{l^2}$. The result follows.
\end{proof}

For the non-integral domain case, if all annihilators have size $m$, then the inequalities of Theorem \ref{lower} and Theorem \ref{upper} lead to the following.

\begin{corollary}\label{cor36}
If $R$ is a non-integral domain, $|R|=l$ and $|Ann(x)|=m$ for all $x \in Z(R)$, then $P(R)=\frac{2l+(m-1)k-1}{l^2}$.
\end{corollary}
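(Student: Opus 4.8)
The plan is to bypass a naive squeeze between Theorem~\ref{lower} and Theorem~\ref{upper} and instead recompute $|Ann|$ directly, observing that the hypothesis $|Ann(x)|=m$ for all $x\in Z(R)$ collapses the sole inequality in the proof of Theorem~\ref{upper} into an equality. Note that the lower bound of Theorem~\ref{lower}, namely $\frac{2l+k-1}{l^2}$ with $k=|Z(R)|$, is in general strictly weaker than the upper bound $\frac{2l+(m-1)k-1}{l^2}$ (the two agree only when $m=2$, since equality forces $(m-2)k=0$ and $k\geq 1$ here), so the two stated theorems do not by themselves pinch $P(R)$ to a single value. The extra hypothesis is precisely what is needed to recover the missing strength.

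First I would start from the standard decomposition already used in both theorems,
$$|Ann|=|Ann(0)|+\sum_{x\in Z(R)}|Ann(x)|+\sum_{0\neq x\notin Z(R)}|Ann(x)|.$$
Next I would evaluate each term under the hypothesis: $|Ann(0)|=l$ since $0$ annihilates everything; the middle sum is \emph{exactly} $m\cdot k$ rather than merely bounded by it, because every one of the $k$ nonzero zero-divisors has annihilator of size $m$; and the final sum is $(l-1-k)\cdot 1$ since a nonzero element that is not a zero-divisor is a unit, whose only annihilator is $0$. Combining gives $|Ann|=l+mk+(l-1-k)=2l+(m-1)k-1$ with equality throughout, and dividing by $l^2=|R\times R|$ yields the claimed formula.

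The only point needing care — and the one place where the ambient assumptions on $R$ matter — is the claim that $|Ann(x)|=1$ for every nonzero $x\notin Z(R)$. I would justify this by recalling that in a finite commutative ring with identity every element is either a zero-divisor or a unit; hence a nonzero non-zero-divisor is a unit $u$, and $uy=0$ forces $y=u^{-1}\cdot 0=0$, so $Ann(u)=\{0\}$. With that in hand the computation is entirely exact, no inequality survives, and the result is an equality rather than a bound. Equivalently, one may phrase the argument as a genuine sandwich: rerunning the lower-bound count of Theorem~\ref{lower} with $|Ann(x)|=m$ in place of the crude estimate $|Ann(x)|\geq 2$ produces the lower bound $\frac{2l+(m-1)k-1}{l^2}$, which now coincides with the upper bound of Theorem~\ref{upper} exactly.
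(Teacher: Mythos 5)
Your proof is correct, and it does more than the paper does: the paper offers no written proof of this corollary at all, only the lead-in assertion that ``the inequalities of Theorem \ref{lower} and Theorem \ref{upper} lead to'' the statement. Your opening observation is exactly the right diagnosis of why that assertion cannot be taken at face value: the lower bound $\frac{2l+k-1}{l^2}$ of Theorem \ref{lower} coincides with the upper bound $\frac{2l+(m-1)k-1}{l^2}$ of Theorem \ref{upper} only when $(m-2)k=0$, i.e.\ only when $m=2$ (since $k=|Z(R)|\geq 1$ for a non-integral domain), so the two stated bounds by themselves do not pinch $P(R)$ to a single value. What the paper must intend, and what you carry out explicitly, is to rerun the counting inside the proofs of those theorems with the exact hypothesis $|Ann(x)|=m$ replacing the estimates $|Ann(x)|\geq 2$ and $|Ann(x)|\leq m$; this turns every inequality into an equality and gives $|Ann|=l+mk+(l-1-k)=2l+(m-1)k-1$ exactly. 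One small simplification: your unit argument for the step $|Ann(x)|=1$ when $0\neq x\notin Z(R)$ is valid under the paper's standing assumptions (finite, commutative, with identity), but it is not needed --- by the very definition of $Z(R)$ as the set of nonzero zero-divisors, a nonzero element outside $Z(R)$ has no nonzero annihilating partner, so $Ann(x)=\{0\}$ with no appeal to finiteness or invertibility. In short, your write-up supplies the exact-computation step that the paper's one-line justification glosses over, and corrects a literally false reading of how the corollary follows from the two theorems.
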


\medskip
Before we start a new section, we require to have the following theorem of computing the probability of direct product of rings.

\begin{theorem}\label{pro}
Let $R=R_1 \times R_2$, where $R_1$ and $R_2$ are rings. Then $P(R)=P(R_1)P(R_2)$.
\end{theorem} 
\begin{proof}
Recall that $P(R)=\frac{|Ann|}{|R \times R|}$, where
 $Ann=\{((r_1, s_1), (r_2, s_2))\in R\times R \;|\; (r_1r_2, s_1s_2)=(0, 0)\}$.
 Rewrite $Ann$ in terms of $Ann(R_1)$ and $Ann(R_2)$, where 
 $Ann(R_1):=\{(r_1, r_2)\in R_1\times R_1 \;|\; r_1r_2=0\}$ and   
 $Ann(R_2):=\{(s_1, s_2)\in R_2\times R_2 \;|\; s_1s_2=0\}$, as follows: \\
 $Ann=\{((r_1, s_1), (r_2, s_2))\in R\times R \;|\; (r_1r_2, s_1s_2)=(0, 0)\}=$
$\{(r_1, r_2)\in R_1\times R_1 \;|\; r_1r_2=0\}$$\{(s_1, s_2)\in R_2\times R_2 \;|\; s_1s_2=0\}=$
$Ann(R_1)$$Ann(R_2)$. Thus, $P(R)=\frac{|Ann|}{|R \times R|}=\frac{|Ann(R_1)||Ann(R_2)|}{|R_1 \times R_1||R_2 \times R_2|}=P(R_1)P(R_2)$ 
\end{proof} 

\section{The ring $\mathds{Z}_n$}

In this section $P(\mathds{Z}_n)$ will be found, where $\mathds{Z}_n$ is the ring of integers modulo $n$. It is well known that if $n=p_1^{k_1} \cdot p_2^{k_2} \cdots p_r^{k_r}$, then $\mathds{Z}_n \cong \mathds{Z}_{p_1^{k_1}} \times \mathds{Z}_{p_2^{k_2}} \times \cdots \times \mathds{Z}_{p_r^{k_r}}$. From Theorem \ref{pro}, we only require to find $P(\mathds{Z}_{p^k})$, for some positive integer $k$.

\begin{theorem}\label{thm}
$P(\mathds{Z}_{p^k})=\frac{(k+1)p-k}{p^{k+1}}$, where $p$ is a prime and $k \geq 1$.
\end{theorem}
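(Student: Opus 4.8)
The plan is to compute $|Ann| = \sum_{x \in \mathds{Z}_{p^k}} |Ann(x)|$ directly, by partitioning $\mathds{Z}_{p^k}$ according to the largest power of $p$ dividing each element. Writing $l = p^k$, I would first record that every nonzero $x$ has the form $x = p^j u$ with $0 \le j \le k-1$ and $u$ a unit, and I would set $j = k$ for $x = 0$. Because an annihilator is unchanged under multiplication by a unit, $Ann(x)$ depends only on this valuation $j$, which reduces the whole problem to understanding the $k+1$ valuation classes.

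Next I would evaluate the annihilator on each class. The congruence $p^j y \equiv 0 \pmod{p^k}$ holds precisely when $p^{k-j} \mid y$, so $Ann(p^j) = p^{k-j}\mathds{Z}_{p^k}$ and hence $|Ann(x)| = p^j$ for every $x$ of valuation $j$; in particular $|Ann(0)| = p^k = l$, which matches the observation used in Theorem \ref{lower}. I would then count the elements in each class: for $0 \le j \le k-1$ the number of $x$ of valuation exactly $j$ is $|p^j \mathds{Z}_{p^k}| - |p^{j+1}\mathds{Z}_{p^k}| = p^{k-j} - p^{k-j-1}$, while valuation $k$ occurs only for $x = 0$.

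Substituting into the sum gives
$$|Ann| = p^k + \sum_{j=0}^{k-1} \left(p^{k-j} - p^{k-j-1}\right) p^j.$$
The pleasant point is that each summand collapses to $p^k - p^{k-1}$ independently of $j$, so the $k$ terms contribute $k\left(p^k - p^{k-1}\right)$ and $|Ann| = (k+1)p^k - k p^{k-1}$. Dividing by $l^2 = p^{2k}$ and cancelling $p^{k-1}$ then produces $P(\mathds{Z}_{p^k}) = \frac{(k+1)p - k}{p^{k+1}}$, as claimed.

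I do not expect a serious obstacle here: the computation is elementary once the valuation partition is in place. The only points demanding care are bookkeeping ones, namely correctly isolating the boundary class $j = k$ (the single element $0$, whose annihilator is all of $\mathds{Z}_{p^k}$) and verifying that the counts of units and higher-valuation elements neither overlap nor omit a residue. As a sanity check I would confirm that at $k = 1$ the formula returns $\frac{2p-1}{p^2}$, the value forced by Corollary \ref{cor1} since $\mathds{Z}_p$ is a field.
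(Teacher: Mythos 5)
Your proposal is correct and follows essentially the same route as the paper: both partition $\mathds{Z}_{p^k}$ by the exact power of $p$ dividing each element (your valuation classes $j$ are the paper's sets $S_i$, with units and $0$ handled as the boundary cases), show each element of valuation $j$ has annihilator of size $p^j$, and observe that every class contributes $p^k - p^{k-1}$ to the sum. Your version is marginally cleaner in justifying $|Ann(x)| = p^j$ via the congruence $p^j y \equiv 0 \pmod{p^k}$ and unit-invariance, but the decomposition and computation are the same.
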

\begin{proof}
It is clear that the set of nonzero zero divisors of $\mathds{Z}_{p^k}$ is $ S:= \{ p,$ $ 2p,$ $ 3p,$ $ \cdots, $ $(p^{k-1} - 1) p \} $ with size $p^{k-1}-1$. Rewrite the set $S$ as the union of $k-1$ disjoint sets $S_i$, $i=1, 2, \cdots, k-1$, that is, $S=\bigcup_{i=1}^{k-1}S_i$, such that $S_i$ contains $mp^i$, $m \neq lp$, the multiple of $p$, and $m=1, 2, \cdots, p^{k-i}-1$. Thus, the size of each $S_i$ is $(p^{k-i}-1)-(p^{k-(i+1)}-1)=p^{k-i}-p^{k-(i+1)}$. One can see that the product of any element of $S_i$ with every element of $\bigcup_{j=1}^{i}S_{k-j}$ is zero. So, the annihilator of each element of $S_i$ has size $p^i$. Hence
$|Ann|=|Ann(0)|+\sum_{x \in S} |Ann(x)|+\sum_{0 \neq x \notin S} |Ann(x)|=p^k+\sum_{i=1}^{k-1}p^i(p^{k-i}-p^{k-(i+1)})+(p^k-1-(p^{k-1}-1))=p^k+\sum_{i=1}^{k-1}(p^k-p^{k-1})+(p^k-p^{k-1})=(k+1)p^k-kp^{k-1}$. Therefore, $P(\mathds{Z}_{p^k})=\frac{|Ann|}{(p^k)^2}=\frac{p^{k-1}((k+1)p-k)}{p^{2k}}=\frac{(k+1)p-k}{p^{k+1}}$.
\end{proof}

\begin{corollary}
If  $n=p_1^{k_1} \cdot p_2^{k_2} \cdots p_r^{k_r}$, then $P(\mathds{Z}_n)=\prod_{i=1}^{r}\frac{(k_i+1)p_i-k_i}{p^{k_i+1}}$.
\end{corollary}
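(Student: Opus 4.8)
The plan is to combine the ring-theoretic decomposition of $\mathds{Z}_n$ with the multiplicativity of $P$ established in Theorem \ref{pro} and the closed form of Theorem \ref{thm}. As recalled just before Theorem \ref{thm}, the Chinese Remainder Theorem provides a ring isomorphism $\mathds{Z}_n \cong \mathds{Z}_{p_1^{k_1}} \times \cdots \times \mathds{Z}_{p_r^{k_r}}$ whenever $n = p_1^{k_1}\cdots p_r^{k_r}$ is the prime factorization. First I would observe that $P(R)$ depends only on the isomorphism class of $R$: any ring isomorphism $\varphi\colon R \to R'$ satisfies $xy = 0$ if and only if $\varphi(x)\varphi(y)=0$, so it carries the set $Ann$ of $R$ bijectively onto that of $R'$ while preserving $|R|$. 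Consequently $P(\mathds{Z}_n) = P\big(\mathds{Z}_{p_1^{k_1}} \times \cdots \times \mathds{Z}_{p_r^{k_r}}\big)$.

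Next I would upgrade Theorem \ref{pro}, which is stated for a product of two rings, to an arbitrary finite product by induction on $r$. For $r=1$ there is nothing to prove. For the inductive step one uses the associativity of the direct product to write $R_1 \times \cdots \times R_r \cong R_1 \times (R_2 \times \cdots \times R_r)$, applies Theorem \ref{pro} to the two factors $R_1$ and $R_2 \times \cdots \times R_r$, and then applies the induction hypothesis to the second factor. This yields $P\big(\mathds{Z}_{p_1^{k_1}} \times \cdots \times \mathds{Z}_{p_r^{k_r}}\big) = \prod_{i=1}^r P(\mathds{Z}_{p_i^{k_i}})$.

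Finally I would substitute the formula of Theorem \ref{thm}, namely $P(\mathds{Z}_{p_i^{k_i}}) = \frac{(k_i+1)p_i - k_i}{p_i^{k_i+1}}$, into each factor, obtaining the claimed product $\prod_{i=1}^r \frac{(k_i+1)p_i - k_i}{p_i^{k_i+1}}$. No step presents a genuine obstacle: the only points requiring care are the explicit verification that $P$ is an isomorphism invariant, which legitimizes transporting the computation across the CRT isomorphism, and the routine bookkeeping in the induction that promotes the two-factor statement of Theorem \ref{pro} to $r$ factors. The substantive work has already been carried out in Theorems \ref{pro} and \ref{thm}, so the corollary is essentially their direct assembly.
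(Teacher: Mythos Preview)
Your proposal is correct and follows essentially the same approach as the paper: invoke the CRT decomposition of $\mathds{Z}_n$, apply Theorem~\ref{pro} (extended to $r$ factors) to factor $P(\mathds{Z}_n)$ as $\prod_{i=1}^{r} P(\mathds{Z}_{p_i^{k_i}})$, and then substitute the formula from Theorem~\ref{thm}. Your additional remarks on isomorphism invariance of $P$ and the induction promoting Theorem~\ref{pro} to finitely many factors simply make explicit what the paper takes for granted.
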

\begin{proof}
It is well known that if $n=p_1^{k_1} \cdot p_2^{k_2} \cdots p_r^{k_r}$, then $\mathds{Z}_n \cong \mathds{Z}_{p_1^{k_1}} \times \mathds{Z}_{p_2^{k_2}} \times \cdots \times \mathds{Z}_{p_r^{k_r}}$. By Theorem \ref{pro}, $P(\mathds{Z}_n)=\prod_{i=1}^{r}P(\mathds{Z}_{p_i^{k_i}})$. From Theorem \ref{thm}, the result follows. 
\end{proof}

\end{document}